\newif\iftopbiac\topbiacfalse%\topbiactrue
\newcommand{\epsfig}[1]{\reminder{Missing picture here!}}
\newtheorem{satz}{\hspace{-1.5mm}}[section]
\newcommand{\lineclear}
      {\rule{0pt}{0pt}\nopagebreak\par\nopagebreak\noindent}
\newenvironment {lemma} [1]
{\begin{satz} {\bf Lemma (#1) }\nopagebreak}{\end{satz}}
\newenvironment {defi} [1]
{\begin{satz} {\bf Definition (#1)}\nopagebreak}{\end{satz}}
\newenvironment {theo} [1]
{\begin{satz} {\bf Theorem (#1)}\nopagebreak}{\end{satz}}
\newenvironment {coro} [1]
{\begin{satz} {\bf Corollary (#1) }\nopagebreak}{\end{satz}}
\newenvironment {coro*}
{\begin{satz} {\bf Corollary }\nopagebreak}{\end{satz}}
\newenvironment {algo} [1]
{\begin{satz} {\bf Algorithm (#1) }\nopagebreak}{\end{satz}}
\newenvironment{claim}
{\par\medskip\noindent\textbf{Claim.}\begin{itshape}}{\end{itshape}\par\medskip\noindent}
\def\reminder#1{{\sf #1} }
\def\hide#1{}
\newcommand{\proofstep}[1]{\medskip{\textbf{#1}}}
\newcommand{\Nplus}{{\mathbb N^*}}
\newcommand{\Z}{\mathbb {Z}}
\newcommand{\R}{\mathbb {R}}
\newcommand\M{\mathcal M}
\newcommand{\Circle}{{\mathbb S}^1}
\newcommand\sm{\setminus}
\newcommand\ovl[1]{\overline {#1}}
\newcommand{\0}{{\tt 0}}
\newcommand{\1}{{\tt 1}}
\renewcommand{\*}{{\tt \star}}
\newcommand{\Sym}{\Sigma^1}
\newcommand{\Symg}{\Sigma}
\newcommand{\Syms}{\Sigma^\star}
\newcommand{\IntAddr}{\to}
\newcommand{\IntAdr}{\to}
\newcommand{\orb}{\mbox{\rm orb}}
\newcommand{\evil}{evil}
\renewcommand\theta{\vartheta}
\newcommand{\remark}{\par\medskip \noindent {\sc Remark. }}
\newcommand{\comment}[1]{\marginpar{#1}}
\begin{document}
\title[Bernoulli measure of admissible kneading sequences \today]
{Bernoulli measure of complex admissible kneading sequences}
\hide{ \\[4mm] Version of \today}

\subjclass[2010]{37E15,37E25,37F10,37F20}

\author{Henk Bruin}
\author{Dierk Schleicher}
\address{Department of Mathematics, University of Surrey, Guildford GU2 7XH, 
United Kingdom}
\email{H.Bruin@surrey.ac.uk}

\address{
Jacobs University Bremen, Research I,  P.O. Box 750 561, D-28725 Bremen,
Germany}
\email{dierk@jacobs-university.de}

\begin{abstract}
Iterated quadratic polynomials give rise to a rich collection of different dynamical systems that are parametrized by a simple complex parameter $c$. The different dynamical features are encoded by the \emph{kneading sequence} which is an infinite sequence over $\{0,\1\}$. Not every such sequence actually occurs in complex dynamics. The set of admissible kneading sequences was described by Milnor and Thurston for real quadratic polynomials, and by the authors in the complex case. We prove that the set of admissible kneading sequences has positive Bernoulli measure within the set of sequences over $\{0,\1\}$. 
\end{abstract}

\maketitle

\section{Introduction}\label{sec:intro}

One of the reasons why holomorphic dynamics is a successful subject is because the rigidity of the complex structure makes it possible to describe many properties in terms of symbolic dynamics. One of the most important concepts is that of the \emph{kneading sequence}. In its simplest form, for real quadratic polynomials, it is a sequence of symbols ``left'' and ``right'' describing the location of the critical orbit with respect to the point of symmetry (the critical point). Milnor and Thurston~\cite{MiT} gave a precise criterion which possible left/right-sequences occur as kneading sequences of real quadratic polynomials, or equivalently of any unimodal real map (but they write their combinatorics in a slightly different equivalent way, keeping track of whether the map is locally increasing or decreasing). 

Kneading sequences have a natural generalization to complex quadratic polynomials, normalized as $z\mapsto z^2+c$. To see this, suppose the dynamic ray at angle $\theta$ lands at the critical value, so that the rays at angles $\theta/2$ and $(1+\theta)/2$ land at the critical point. Then the kneading sequence $\nu(\theta)=\nu_1\nu_2\nu_3\dots\in\{\0,\1\}^\Nplus$ of the angle $\theta$ can be defined as follows:
\[
\nu_k \in \{ \0 , \1 \}^{\Nplus}, \quad \nu_i = \left\{
\begin{array}{ll}
\1 & \text{ if } 2^{(i-1)}\theta  \in (\,\theta/2,(\theta+1)/2\,) \;;\\
\0 & \text{ if } 2^{(i-1)}\theta \in (\,(\theta+1)/2,\theta/2\,) \;;\\
\* & \text{ if } 2^{(i-1)}(\theta) \in\{ \theta/2,(\theta+1)/2\} \;.
\end{array} \right.
\]
Since our quadratic polynomials are monic (have leading coefficient $1$), we define dynamic rays in the canonical way so that every ray at angle $\theta$ approaches $\infty$ at angle $2\pi\theta$. Then it is easy to see that the Milnor-Thurston kneading sequence of a real quadratic polynomial with the $\theta$-ray landing at the critical value equals the kneading sequence $\nu(\theta)$ (identifying ``left'' by $\1$ and ``right'' by $\0$). But this latter definition applies to all angles $\theta\in\Circle=\R/\Z$ and thus to all complex polynomials (at least those for which some dynamic ray lands at the critical value; but this question does not matter from a combinatorial point of view).

Some technical remarks:
The entry $\*$ occurs (at position $n$) if and only $\theta$ is periodic of period $n$; this boundary case happens in the Milnor-Thurston case also for periodic critical points. And it may happen that several dynamic rays land at the critical value, defining a kneading sequence for each. It turns out that the resulting kneading sequence is independent of this choice (this is related to the fact that, at least for postcritically finite polynomials,  the critical value is always an endpoint of the connected hull of the critical orbit within the Julia set).

If the filled-in Julia set is locally connected and has no interior (so the Julia set is a dendrite), then it turns out that the kneading sequence alone allows one to give a complete topological description of the Julia set and its dynamics. It may happen that different complex quadratic polynomials have the same kneading sequence. In the dendrite case, this happens if and only if different Julia sets are topologically conjugate (by a conjugacy \emph{not} necessarily respecting the embedding into the complex plane), and this is related to certain symmetries of the Mandelbrot set that are best described in terms of \emph{internal addresses} (compare \cite{IntAddrNew}). (If the main conjecture about quadratic polynomials holds --- local connectivity of the Mandelbrot set, or equivalently, combinatorial rigidity --- then the only conjugacy that respects the embedding into the plane is the identity.)

In \cite{IntAdr} the question was raised which sequences in $\{\0,\1\}^\Nplus$ occur as kneading sequences of complex quadratic polynomials or equivalently as kneading sequences of angles; we call such kneading sequences \emph{complex admissible}. This extension of the Milnor-Thurston characterization from the real to the complex case was answered in \cite[Theorem~4.2]{KneadingAdmiss} in terms of a necessary and sufficient combinatorial condition involving internal addresses: see Definition~\ref{DefAdmissCond}.

In this note we prove the following result.
\begin{theo}{Positive Measure for Admissible Kneading Sequences}
\label{ThmAdmissMeas}\lineclear
The set of admissible kneading sequences has positive
$(\frac12$-$\frac12)$-product measure as subset of $\{ \0, \1\}^\Nplus$.
\end{theo}

In fact, the same result holds for any $(p,1-p)$-product measure with $p\in(0,1)$.

\looseness-1

The admissibility condition is so that the non-complex admissible kneading sequences form a countable union of \emph{cylinders} in $\{\0,\1\}^\Nplus$ (a $n$-cylinder is the set of sequences with common first $n$ entries). It thus suffices to discuss admissibility of periodic sequences.
\hide{, or equivalently sequences that we call \emph{$\*$-periodic}: those sequences that have period $n$ and where a single $\*$ occurs exactly at the end of the period, i.e., at positions that are multiples of $n$.}

The fundamental tool for characterizing complex admissible kneading sequences is the fact (shown in \cite{BKS}) that every periodic kneading sequence $\nu$ has an associated \emph{abstract Hubbard tree} which is a finite topological tree that satisfies all properties of Hubbard trees of quadratic polynomials, except that it may possibly not have an embedding into the plane that is respected by the dynamics. But it is a finite tree $T$ with a continuous surjective map $f\colon T\to T$ of degree at most two, with a single critical point which has a finite orbit that contains all endpoints of $T$, and subject to a certain expansitivity condition. Such a tree gives much more structure to a kneading sequence, and it can be decided relatively easily whether or not it has an embedding compatible with the dynamics, and thus whether or not the given kneading sequence is complex admissible. The only obstruction is a periodic branch point where the first return map does not permute all local branches transitively (and not even with the same period); we call such a branch point \emph{evil}. We have a precise condition if a given kneading sequence $\nu$ has an evil branch points of any period $m$: see Definition~\ref{DefAdmissCond}; if this happens, we say that $\nu$ \emph{fails the admissibility condition for period $m$}. A kneading sequence $\nu$ (periodic or not) is complex admissible if and only if it does not fail the kneading sequence for any period $m$.

The simplest $\*$-periodic kneading sequence that is not complex admissible is $\nu=\ovl{\1\0\1\1\0\*}$; the associated Hubbard is shown in Figure~\ref{Fig:Evil}. The sequence $\nu$ fails the admissibility condition for period $3$, corresponding to an evil branch point of period $3$ in the tree. The entire $6$-cylinder of sequences starting with $\1\0\1\1\0\0\dots$ fails the admissibility condition for period $m=3$.

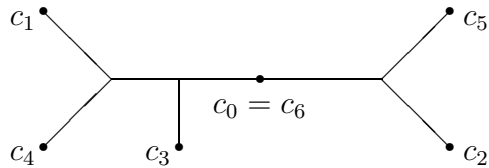
\begin{figure}[htb]
\begin{center}
\begin{minipage}{90mm}
\unitlength=9mm
\begin{picture}(10,3.2) \let\ts\textstyle
\put(3,2){\line(1,0){4}} \put(2,1){\line(1,1){1}}
\put(2,3){\line(1,-1){1}} \put(4,2){\line(0,-1){1}}
\put(7,2){\line(1,1){1}} \put(7,2){\line(1,-1){1}}

\put(5.2,2){\circle*{0.1}} \put(4.5,1.5){$c_0 = c_6$}
\put(2,3){\circle*{0.1}} \put(1.5,2.8){$c_1$}
\put(8,1){\circle*{0.1}} \put(8.2,0.8){$c_2$}
\put(4,1){\circle*{0.1}} \put(3.5,0.8){$c_3$}
\put(2,1){\circle*{0.1}} \put(1.5,0.8){$c_4$}
\put(8,3){\circle*{0.1}} \put(8.2,2.8){$c_5$}
\end{picture}
\end{minipage}
\vskip-20pt
\end{center}
\caption{The Hubbard tree with kneading sequence
$\overline{\1\0\1\1\0\*}$ contains an orbit of evil branch points of period $3$: the first return map interchanges two arms and fixes the third.
We write $c_i = f^{\circ i}(0)$.}
\label{Fig:Evil}
\end{figure}

Further properties on Hubbard trees, including the precise construction, properties and connections to external rays, quadratic laminations etc., can be found in
\cite{Orsay, MiBook, Poirier, KneadingAdmiss, BKS, Ke2}.

\section{Basic definitions and properties}\label{sec:def}

By convention, a kneading sequence starts with $\1$. 
 We say that a
sequence $\nu$ is {\em $\*$-periodic of period $n$} if $\nu =
\ovl{\nu_1 \ldots \nu_{n-1}\*}$ with $\nu_1=\1$ and
$\nu_i \in \{ \0,\1 \}$ for $1 < i < n$.
Let
\begin{eqnarray*}
\Symg &:=& \{\0,\1\}^\Nplus \,\, , \\
\Sym &:=&
\{\nu\in\Symg\colon \mbox{the first entry in $\nu$ is $\1$}\}  \,\,,\\
\Syms &:=& \Sym\cup \{\mbox{all $\*$-periodic sequences except $\ovl\*$}\} \,\,,
\end{eqnarray*}
In order to avoid silly counterexamples, $\ovl{\*}$ is not
considered to belong to $\Syms$. All sequences in $\Syms$
will be called {\em kneading sequences}.

While kneading sequences are just binary sequences, they have a ``human-readable'' recoding in terms of \emph{internal addresses}, which are strictly increasing sequences of integers starting with $1$ (and which give an elegant description of the corresponding parameters within the Mandelbrot set $\M$: taking a path in $\M$ from the main cardioid to some parameter $c$, the successive entries of the internal address give the lowest periods of the hyperbolic components that one encounters on this path, see \cite{IntAdr}). The conversion algorithm is bijective and is based on the following function.

\begin{defi}{$\rho$-Function and Internal Address} 
\label{DefRho} \lineclear
For a sequence $\nu \in\Syms$, define
\[
\rho_{\nu}:\Nplus \to \Nplus\cup\{\infty\}, \quad
\rho_\nu(n) = \inf \{ s > n: \nu_s \neq \nu_{s-n} \}.
\]
We usually write $\rho$ for $\rho_\nu$ and call
$\orb_\rho(s) = \{ \rho^{\circ i}(s) \}_{i \ge 0}$ 
the $\rho$-orbit of $s$. The case $s=1$ is the most important one;
this is the {\em internal address} of $\nu$
and we denote it as
\[
1 =  S_0 \IntAddr S_1 \IntAddr S_2  \IntAddr \dots
\]
with $S_{k+1}=\rho(S_k)$. If $\rho^{\circ k+1}(1) = \infty$, then we say that the internal
address is finite: $1 \to \rho(1) \to \ldots \to \rho^{\circ k}(1)$.
\end{defi}

\hide{
We have $\rho(n)=\infty$ if and only if the sequence $\nu$ is
periodic and its period divides $n$. If $\nu\in\Sym$ is periodic of
exact period $n$, the number $n$ may or may not appear in the internal
address (with the first $n-1$ entries in $\nu$ fixed, these two
possibilities can be realized by putting $\0$ or $\1$ at the $n$-th
position). If it does, then the internal address stops there;
otherwise, it is infinite, see  \cite[Lemma 4.3]{BKS}. %~\ref{LemExactPeriod}.
}

The map from kneading sequences in $\Sym$ to internal addresses is
injective. In fact, the algorithm of this map (originally from
\cite[Algorithm~6.2]{IntAdr}) can easily be inverted:

\begin{algo}{From Internal Address to Kneading Sequence}
\label{AlgIntAdrKneading} \lineclear
The following inductive algorithm turns internal addresses into
kneading sequences in $\Sym$: the internal address $S_0=1$ has
kneading sequence $\ovl\1$, and given the kneading sequence $\nu^k$
associated to $1\IntAdr S_1\IntAdr\ldots\IntAdr S_k$,
the kneading sequence associated to $1\IntAdr S_1\IntAdr
\ldots\IntAdr S_k\IntAdr S_{k+1}$ consists of the first
$S_{k+1}-1$ entries of $\nu^k$, then the opposite to the entry
$S_{k+1}$ in $\nu$ (switching $\0$ and $\1$), and then repeating
these $S_{k+1}$ entries periodically.
\end{algo}
\begin{proof}
The kneading sequence $\ovl \1$ has internal address $1$. If $\nu^k$
has internal address $1\IntAdr S_1\ldots\IntAdr
S_k$ and $\nu$ is the internal address of period $S_{k+1}$ as
constructed in the algorithm, then the internal address of $\nu$
clearly starts with $1\IntAdr S_1\IntAdr\ldots\IntAdr
S_k$, and $\rho_{\nu}(S_k)=S_{k+1}$, so the internal address of
$\nu$ is $1\IntAdr S_1\ldots\IntAdr S_k\IntAdr
S_{k+1}$.
\end{proof}

The $\rho$-function is of fundamental importance in the work of 
Penrose~\cite{Pe} under the name of {\em non-periodicity function}; 
the internal address is called {\em principal non-periodicity function}.

It is by means of the $\rho$-function that {\evil} periodic branch points
can be detected. This leads to the condition given 
in \cite[Definition 4.1]{KneadingAdmiss}:

\begin{defi}{The Admissibility Condition}
\label{DefAdmissCond} \lineclear
A kneading sequence $\nu\in\Syms$ {\em fails the admissibility
condition for period $m$} (which is the period of the corresponding \evil\ branch point)
if the following three conditions hold:
\begin{enumerate}
\item
the internal address of $\nu$ does not contain $m$;
\item
if $s<m$ divides $m$, then $\rho(s)\leq m$;
\item
$\rho(m)<\infty$ and if $r\in\{1,\ldots,m\}$ is congruent to $\rho(m)$ modulo $m$, then
$\orb_\rho(r)$ contains $m$.
\end{enumerate}
A kneading sequence {\em fails the admissibility condition} if it does so for some $m\ge 1$.

\noindent
An internal address {\em fails the admissibility condition} if its
associated kneading sequence does.
\end{defi}

A kneading sequence (periodic or not) is called \emph{admissible} unless it fails this condition for any period $m$.

\section{The measure of admissible kneading sequences}\label{sec:measure}

We equip 
the space $\Sym$ of all $\0$-$\1$-sequences starting with $\1$
 with
the product topology and the $(\frac 1 2$-$\frac 1 2)$-product
measure $\mu$, normalized so that $\mu(\Sym)=1$.

\begin{defi}{$\emph n$-Admissible Cylinders and Kneading Sequences}
\label{DefAdmissGeneral} \lineclear
Let $E\subset\Sym$ be the set of all sequences which satisfy the
Admissibility Condition~\ref{DefAdmissCond} for every $m$ (the {\em
set of admissible kneading sequences}). For
$n\geq 1$, a finite word $\nu_1\dots\nu_n$ with $\nu_i\in\{\0,\1\}$ and $\nu_1=\1$
is called an {\em admissible $n$-word} if there is a $\nu\in E$ which begins with
$\nu_1\dots\nu_n$.
An {\em admissible $n$-cylinder}
is an $n$-cylinder that contains an admissible sequence.
Finally, let $E_n$ be the union of the admissible $n$-cylinders;
this is the {\em set of $n$-admissible kneading sequences}.
\end{defi}

Whether or not
$\nu\in\Sym$ fails the admissibility condition for period $m$
depends only on the first $\rho_\nu(m)$ entries in $\nu$, so if
$\nu$ fails the condition for period $m$, then an entire
$\rho_\nu(m)$-cylinder is non-admissible. Thus $E=\cap_{n\geq 1}
E_n$ is a decreasing intersection of sets which are simultaneously
open and compact.

%\newpage

\begin{lemma}{Admissible Kneading Sequences Form Cantor Set}
\label{LemAdmissCantor} \lineclear
The set $E\subset\Sym$ of admissible kneading sequences forms a
Cantor set so that $\Sym\sm E $ is dense in $\Sym$. In particular, $E$ is measurable with respect to $\mu$.
\end{lemma}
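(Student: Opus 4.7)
The plan is to check the four defining properties of a Cantor set---compactness, non-emptiness, total disconnectedness, perfectness---together with the density of $\Sym\sm E$ in $\Sym$. Measurability of $E$ with respect to $\mu$ is then immediate, since $E$ is Borel.

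Closedness, compactness, and total disconnectedness are essentially free. The equality $E = \bigcap_n E_n$ noted above exhibits $E$ as an intersection of closed sets (each $E_n$ is a finite union of clopen cylinders), so $E$ is closed in the compact Cantor space $\Sym$. Total disconnectedness is inherited from $\Sym$, and non-emptiness follows from $\ovl\1 \in E$ (internal address $1$, with no obstructions to verify).

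For density of $\Sym\sm E$: we show every cylinder $[w]$ with $w = \nu_1\dots\nu_n$ and $\nu_1 = \1$ contains a non-admissible sequence. Following the pattern of the example $\ovl{\1\0\1\1\0\*}$ depicted in Figure~\ref{Fig:Evil}, we extend $w$ to a $\*$-periodic sequence of some period $N$ much larger than $n$, choosing the intermediate entries $\nu_{n+1},\dots,\nu_{N-1}$ so that the three conditions of Definition~\ref{DefAdmissCond} are simultaneously satisfied for some period $m$ with $n < m < N$. The arithmetic required to realize this---controlling the internal address of the extension and the $\rho$-orbit relevant to condition (3)---is the main combinatorial content.

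For perfectness, the dual task: given $\nu \in E$ and $n \ge 1$, we produce $\nu' \in E$ agreeing with $\nu$ on the first $n$ entries and with $\nu' \neq \nu$. Using Algorithm~\ref{AlgIntAdrKneading}, we truncate the internal address of $\nu$ to its entries $\le n$ and continue with a new, rapidly-growing extension; a standard observation (implicit in \cite{KneadingAdmiss}) is that internal addresses whose consecutive entries grow fast enough cannot meet the conditions of Definition~\ref{DefAdmissCond} for any $m$, so such extensions remain admissible. The main obstacle is the fine combinatorial engineering required in both density steps---matching a prescribed prefix while forcing (respectively, forbidding) the three delicate conditions of Definition~\ref{DefAdmissCond}---but once $N$ and the new internal-address entries are allowed to be sufficiently large relative to $n$, there is ample flexibility either to create or to avoid evil branch points.
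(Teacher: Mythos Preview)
Your outline matches the paper's: closedness via $E=\bigcap_n E_n$, compactness and total disconnectedness inherited from $\Sym$, then separate arguments for perfectness and for density of $\Sym\sm E$. The issue is that you explicitly defer both of the substantive steps (``the arithmetic required to realize this\dots is the main combinatorial content''; ``the main obstacle is the fine combinatorial engineering\dots''). What remains is a plan, not a proof, and the paper's actual constructions are short enough that omitting them is a genuine gap.

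For density of $\Sym\sm E$, the paper's device is much cleaner than the vague $\*$-periodic extension you describe: pick any \emph{prime} $m>\rho_\nu(1)$ and alter $\nu$ only at positions divisible by $m$, choosing $\nu'_m$ so that $m\notin\orb_{\rho_{\nu'}}(1)$ and $\nu'_{2m}\neq\nu'_m$. Then $\rho_{\nu'}(m)=2m$, so $r=m$ and condition~(3) of Definition~\ref{DefAdmissCond} holds trivially; primality of $m$ makes condition~(2) vacuous. This one-line trick replaces the unspecified ``arithmetic'' you allude to.

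For perfectness, the paper does \emph{not} use your ``rapidly growing internal addresses are admissible'' claim. Instead it invokes the specific fact that direct bifurcations preserve admissibility: if $1\to S_1\to\cdots\to S_k$ is admissible then so is $1\to S_1\to\cdots\to S_k\to pS_k$ for every $p>1$. Combined with the observation that $\nu\in E$ implies $\nu^k\in E$, this immediately gives infinitely many admissible sequences in any cylinder around $\nu$. Your fast-growth criterion is plausible but is a different statement and would itself require proof; the bifurcation fact is standard and does the job without further work.
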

\begin{proof}
Any violation of the admissibility condition for period $m$
discards an entire cylinder subset of $\Sym$.
The set of non-admissible kneading sequences is a
union of such cylinder sets and hence open. As a closed subset of
the compact set $\Sym$, the set $E$ is compact.
Clearly, $E$ is totally disconnected because $\Sym\supset E$ is.

Recall from Algorithm~\ref{AlgIntAdrKneading} that if $S_k=\rho_\nu^{\circ k}(1)$ is an element of the internal address of $\nu$ then $\nu^k$ is the $S_k$-periodic kneading sequence that coincides with $\nu$ for $S_k$ entries.
If $\nu \in E$, then $\nu^k \in E$.
We can always extend admissible kneading sequences via direct bifurcations,
so for any admissible internal address $1\IntAdr\dots\IntAdr S_k$ and any $p > 1$, the internal address $1 \IntAdr S_1\IntAdr \ldots \IntAdr S_k \IntAdr pS_k$ is admissible. Since $k$ and $p$ are arbitrary, $\nu$ is not an isolated point.

Finally, non-admissible sequences are dense in $\Sym$ because any
finite word can be continued into a non-admissible one.
Indeed, given any $\nu=\nu_1\nu_2\nu_3\dots\in\Sym$ and any prime $m>\rho_{\nu}(1)$, define a sequence $\nu'=\nu'_1\nu'_2\dots$ with $\nu'_k=\nu_k$ unless $m$ divides $k$, choosing $\nu'_m$ so that $m$ is not in the internal address of $\nu'$, and $\nu'_{2m}\neq\nu'_m$. Then $\rho_{\nu'}(m)=2m$ and $\nu'$ fails the admissibility condition for period $m$: the  first and third conditions are clearly satisfied, and the second one is void because $m$ is prime.
\end{proof}

\begin{coro}{Admissible Periodic Sequences}
\label{CorAdmissCylinders} \lineclear
Let $R_k$ be the number of admissible periodic kneading sequences
of period $k$ in $\Sym$. Then
\[
\lim_{k\to\infty} \frac{R_k}{2^{k-1}} = \mu(E)  \,\,.
\]
\end{coro}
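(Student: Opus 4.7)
The plan is to squeeze $R_k/2^{k-1}$ between two quantities tending to $\mu(E)$, using that $E = \bigcap_n E_n$ is a decreasing intersection of clopen sets by Lemma~\ref{LemAdmissCantor}, so $\mu(E_k) \searrow \mu(E)$ by continuity of $\mu$.

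\textbf{Upper bound.} Each admissible $k$-periodic sequence $\tilde\nu$ lies in a unique $k$-cylinder, and that cylinder is admissible since it contains $\tilde\nu \in E$. Distinct $k$-periodic sequences sit in distinct $k$-cylinders, so $R_k \le |A_k|$ and, by Definition~\ref{DefAdmissGeneral}, $R_k/2^{k-1} \le \mu(E_k)$. Hence $\limsup_k R_k/2^{k-1} \le \mu(E)$.

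\textbf{Lower bound (reduction).} For the matching lower bound one needs $|A_k| - R_k = o(2^{k-1})$. This difference counts admissible $k$-cylinders $[w_1\ldots w_k]$ whose $k$-periodic representative $\tilde w = \overline{w_1 \ldots w_k}$ fails admissibility at some period $m$. Choosing an admissible $\nu$ in the cylinder, $\nu$ and $\tilde w$ agree on the first $k$ entries. Since failure at $m$ is determined by the first $\rho(m)$ entries of a sequence, any failure of $\tilde w$ must satisfy $\rho_{\tilde w}(m) > k$; otherwise $\nu$ would also fail at $m$, contradicting $\nu \in E$. For $k$-periodic $\tilde w$ one has the convenient identity $\rho_{\tilde w}(qk + r) = qk + \rho_{\tilde w}(r)$ valid for $0 < r < k$ and $q \ge 0$, which reduces the failure conditions of Definition~\ref{DefAdmissCond} at $m = qk + r$ to constraints on the first $k$ entries of $\tilde w$ together with $\rho$-orbit conditions modulo $k$.

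\textbf{Main obstacle.} The technical heart is bounding the count of such bad cylinders. Sequences $\tilde w$ whose exact period strictly divides $k$ number at most $\sum_{d \mid k,\, d < k} 2^{d-1} = O(2^{k/2})$ and are negligible. For $\tilde w$ of exact period $k$ failing at some $m$ with $\rho_{\tilde w}(m) > k$, a case analysis by $r = m \bmod k$ and by whether $k$ appears in the internal address of $\tilde w$ should show that orbit condition~(3) of Definition~\ref{DefAdmissCond} strongly restricts the first $k$ entries: the requirement that $m$ lie in $\orb_{\rho_{\tilde w}}(r')$ for a specific $r'$, combined with the arithmetic-progression structure of $\rho$-orbits above, is expected to leave only sub-exponentially many choices for each candidate failure period, and summing over $m$ still gives $o(2^{k-1})$. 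Carrying out this combinatorial count is the main obstacle; it is conceivable that one in fact has $R_k = |A_k|$, in which case the lower bound is immediate, but even the weaker bound $|A_k| - R_k = o(2^{k-1})$ suffices.
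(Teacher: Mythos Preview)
Your upper bound is fine, but the lower bound is left incomplete --- you name the obstacle yourself and do not resolve it. The paper's proof bypasses the combinatorial count entirely: it is the single line $\mu(E_k)=R_k\,2^{-(k-1)}$ followed by $\mu(E)=\lim_k\mu(E_k)$. In other words, the paper uses precisely the equality $R_k=|A_k|$ that you suspect at the end, and treats it as already established (the text even carries a suppressed cross-reference to a lemma to this effect). The content of that equality is that whenever a $k$-cylinder contains an admissible sequence, its $k$-periodic representative is itself admissible; this is a structural fact about the admissibility condition rather than an asymptotic estimate.

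So your outline is not wrong, but it aims at the weaker target $|A_k|-R_k=o(2^{k-1})$, which is strictly harder than the exact equality the paper relies on, and you have not completed it. The right fix is to prove $R_k=|A_k|$ directly --- using, for instance, that truncating an admissible $\nu$ at the last internal-address entry $S_j\le k$ yields an admissible $S_j$-periodic sequence agreeing with $\nu$ on the first $k$ entries, and then arguing that the $k$-periodic extension of this block is again admissible --- rather than to attempt the $o(2^{k-1})$ count.
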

\begin{proof}
We have $\mu(E_k)=R_k 2^{-(k-1)}$
\hide{(Lemma~\ref{LemPeriodicInAdmissible}) }
and $\mu(E)=\lim_{k\to\infty}\mu(E_k)$.
\end{proof}

\hide{
\begin{theo}{Admissible Kneading Sequences Have Positive Measure}
\label{ThmAdmissMeas}\lineclear
The set of admissible kneading sequences has positive measure:
$\mu(E) > 0$.
\end{theo}
}

Now we are ready to prove Theorem~\ref{ThmAdmissMeas}.

\begin{proof}[Proof of Theorem~\ref{ThmAdmissMeas}]
Fix $n_1 := 100$ and define a sequence of
integers $\{ n_i \}_{i\geq 1}$ so that $n_{i+1}$ is the largest
integer less than $\frac32 n_i$ for which $n_{i+1}-n_i$ is
divisible by $2i$ (this gives $100$, $148$, $220$, $328$,
$488$,\dots).
Let $m_{i} :=({n_{i+1}-n_i})/{2}$. Then one can
check that $(1.45)^i < m_i$. Clearly, $n_{i+1}<\frac 3 2 n_i
<n_i+\frac 3 4 n_{i-1}<n_i+n_{i-1}$ and
$m_i < n_i/4$.

Let $F_1$ be some cylinder of length $n_1$ containing an
admissible sequence so that no block of $40$ consecutive $\0's$
appears among the first $n_1$ entries. For $i \geq 1$, define
\begin{eqnarray*}
F_{i+1} := \left\{
\begin{minipage}[c]{18mm}
$\nu \in F_i \,\colon$ \hfill \rule{0pt}{1pt} \\
\hfill \rule{0pt}{1pt} \\
\hfill \rule{0pt}{1pt} \\
\hfill \rule{0pt}{1pt}
\end{minipage}
\begin{minipage}[c]{87mm}
every $k \leq n_{i}+m_{i}$ has an $s \in\orb_{\rho_\nu}(k)\cap
\orb_{\rho_\nu}(1)$ \hfill\\
\,\,\,\, with $n_{i}+m_{i} < s \leq n_{i+1}$  \\
and the entries $n_i$, $n_i+1$, \dots, $n_{i+1}-1$ in $\nu$ do \\
not contain a block of $\lfloor n_i/8\rfloor$ zeroes.
\end{minipage}
\right\} \,\,
\end{eqnarray*}
For every $\nu\in F_{i+1}$, every $k\leq n_i+m_i$ satisfies
$\rho_\nu(k)\leq n_{i+1}$, so every $F_{i+1}$ is a union of
cylinder sets of length $n_{i+1}$. 

\begin{claim}
The second condition
for $F_{i+1}$ implies that for all $N\le n_{i+1}$, the first $N$ entries in any $\nu\in
F_{i+1}$ do not contain a contiguous block of $\lfloor N/4\rfloor$
zeroes. 
\end{claim}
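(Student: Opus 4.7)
The plan is to prove the claim by induction on $i$. The base case $i=0$ (that is, $\nu\in F_1$ with $N\le n_1=100$) is handled directly from the choice of $F_1$, where the stated condition on the first $n_1$ entries suffices for the modest values of $\lfloor N/4\rfloor$ in this range. For the inductive step I would assume the claim holds for every $\nu\in F_i$ and every $N\le n_i$, and deduce it for $\nu\in F_{i+1}\subset F_i$ and every $N\le n_{i+1}$. The inductive hypothesis handles $N\le n_i$ directly, so the work lies in the new range $n_i<N\le n_{i+1}$.

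For this range I would assume for contradiction that the first $N$ entries contain a contiguous block of $L:=\lfloor N/4\rfloor$ zeros, at positions $a,a+1,\dots,b=a+L-1$ with $b\le N$, and split into three cases by the position of the block relative to $n_i$. In the case $b\le n_i$ the block lies in the first $n_i$ entries and the inductive hypothesis applied at $N'=n_i$ gives $L\le\lfloor n_i/4\rfloor-1$, contradicting $L\ge\lfloor n_i/4\rfloor$. In the case $a\ge n_i$ the block lies in $[n_i,n_{i+1}]$; after ruling out $b=n_{i+1}$ by comparing $n_{i+1}-n_i=2m_i$ with $\lfloor n_i/8\rfloor$, the second condition of $F_{i+1}$ bounds the block length by $\lfloor n_i/8\rfloor-1$, again contradicting $L\ge\lfloor n_i/4\rfloor$. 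The remaining case $a<n_i\le b$ is the crossing case.

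In the crossing case I would set $L_1:=n_i-a+1$ for the zeros in $[a,n_i]\subset\{1,\dots,n_i\}$ and $L_2:=b-n_i$ for the zeros in $(n_i,b]$, so that $L=L_1+L_2$. Applying the inductive hypothesis at $N'=n_i$ to the block $[a,n_i]$ gives $L_1\le\lfloor n_i/4\rfloor-1$, while the second condition of $F_{i+1}$, applied to the contiguous sub-block $[n_i,b]$ of length $L_2+1$ inside $[n_i,n_{i+1}-1]$, gives $L_2+1\le\lfloor n_i/8\rfloor-1$. Combining these inequalities with the growth constraint $n_{i+1}<\tfrac{3}{2}n_i$ (so $\lfloor N/4\rfloor<3n_i/8$) should yield the desired contradiction $L=L_1+L_2<\lfloor N/4\rfloor$.

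The main obstacle is precisely this crossing case, where the two halves of the block are controlled by two different tools and the resulting arithmetic is tight. The factor-of-two gap between the inductive bound $\lfloor n_i/4\rfloor$ and the second-condition bound $\lfloor n_i/8\rfloor$, together with the slow growth $n_{i+1}<\tfrac{3}{2}n_i$ of the sequence $\{n_i\}$, is what allows the sum $L_1+L_2$ to be strictly smaller than $\lfloor N/4\rfloor$; weakening either of these would break the induction.
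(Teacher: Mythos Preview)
Your crossing case does not close. From the inductive hypothesis at $N'=n_i$ you get $L_1\le\lfloor n_i/4\rfloor-1$, and from the second condition of $F_{i+1}$ you get $L_2\le\lfloor n_i/8\rfloor-2$, hence $L<3n_i/8$. You then invoke $n_{i+1}<\tfrac32 n_i$, but that yields only an \emph{upper} bound $\lfloor N/4\rfloor<3n_i/8$; for the contradiction $L<\lfloor N/4\rfloor$ you would need a \emph{lower} bound on $\lfloor N/4\rfloor$. The only lower bound available is $\lfloor N/4\rfloor\ge\lfloor n_i/4\rfloor$ (from $N>n_i$), and $3n_i/8>n_i/4$, so nothing is contradicted. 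Concretely, take $N=n_i+1$: your estimates allow $L_1=\lfloor n_i/4\rfloor-1$, $L_2=1$, so $L=\lfloor n_i/4\rfloor=\lfloor N/4\rfloor$, and the ``contradiction'' evaporates. Two upper bounds on the same quantity cannot be played against each other.

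The paper does not argue by induction at all. It bounds \emph{both} pieces of a crossing block by second conditions: a block starting in $[n_j,n_{j+1})$ contributes $<n_j/8$ there and $<n_{j+1}/8$ in the next interval, for a total $<n_j/8+n_{j+1}/8<(5/16)n_j$; since such a block ends at a position $\ge n_{j+1}$, one has $N\ge n_{j+1}$ and hence $N/4\ge n_{j+1}/4>(5/16)n_j$. The decisive point is that the left piece is controlled by the $1/8$-bound coming from $F_j$, not by the $1/4$-bound coming from your inductive hypothesis; your arithmetic is too weak by exactly this factor of two. (Your base case is also not quite as stated: the hypothesis on $F_1$ forbids only blocks of $40$ zeroes, which does not by itself give ``no block of $\lfloor N/4\rfloor$'' for all $N\le 100$.)
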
%
Indeed, for any $n_j <N$, the longest block of consecutive zeroes among entries $n_j$ \dots $n_{j+1}-1$ has length less than $n_j/8$; if these are near the end, they can be continued by less than $n_{j+1}/8$ further zeroes, yielding a total number of consecutive zeroes of less than $n_j/8+n_{j+1}/8< (5/16)n_j$, and they end at entry number $(9/8)n_{j+1}< (27/16)n_j$, so among the first $(27/16)n_j$ entries there are no more than $(5/16)n_j<(27/16)n_j/4$ consecutive zeroes. 

The two main steps in the proof are $E \supset \cap_i F_i$ and
$\mu(F_{i+1})\geq c_i \mu(F_i)$ for numbers $c_i>0$ with $\prod_i
c_i>0$, from which the conclusion will follow.

\proofstep{(1) Admissibility of cylinders $F_i$ and $E\supset\bigcap_i F_i$.}
By induction over $i\geq 1$, we show that all $\nu\in \cap_j F_j$
are $n_{i}$-admissible for all $i$. They are $n_1$-admissible by
hypothesis. Suppose $\nu\in \cap_j F_j$ is $n_i$-admissible but not
$n_{i+1}$-admissible; then $\nu$ fails the Admissibility
Condition~\ref{DefAdmissCond} for some period $m$ with
$n_i<\rho(m)\leq n_{i+1}$. Let
$r\in\{1,2,\dots,m\}$ be congruent to $\rho(m)$ modulo $m$.

If $m\leq n_{i-1}+m_{i-1}$, then by definition of $F_{i-1}$, there is
an $s\in\orb_\rho(m)\cap\orb_\rho(1)$ with $s\leq n_i$; since
$m\notin\orb_\rho(1)$, we have $\rho(m)\leq s\leq n_i$, which is a
contradiction. Hence $m > n_{i-1}+m_{i-1}$.

If $r\geq n_{i-1}+m_{i-1}$, then $\rho(m)\geq m+r >
2n_{i-1}+2m_{i-1}=n_i+n_{i-1}> n_{i+1}$, again a contradiction.
Thus $r<n_{i-1}+m_{i-1}$, and there is an $s\in\orb_\rho(r)\cap
\orb_\rho(1)$ with $s\leq n_i$. By the admissibility condition,
$m\in\orb_\rho(r)$ and $m\notin\orb_\rho(1)$, hence $s>m$.
Thus $s\in\orb_\rho(m)$ and $\rho(m)\leq s\leq n_i$ in
contradiction to $n_i$-admissibility of $\nu$. This final
contradiction shows that $E \supset \cap_i F_i$ as claimed.

\proofstep{(2) A bound on the number of jumps of $\rho$.}
Given $\nu$ and an integer $r$, let $J_\nu(r) = \{ k \leq r : \rho(k)
>    r\}$. Obviously, $J_\nu(r)$ depends only on the first $r$ entries
of $\nu$. 
\begin{claim}
For each $i\geq 1$,
\begin{equation}\label{EqJ}
\mbox{if $ \nu \in F_i$ and $n_i + m_i \leq r < n_{i+1}$
then $\# J_\nu(r) < i/2+150$\,\,.}
\tag{\protect$\*$}
\end{equation}
\end{claim}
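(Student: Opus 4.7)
The plan is to first localize the elements of $J_\nu(r)$ to a narrow window using the defining conditions of the $F_j$'s, and then bound how many periods of the prefix $\nu_1\nu_2\dots\nu_r$ can coexist there by combining Fine and Wilf's periodicity theorem with the no-long-zeros property established just before the claim.

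For the localization, I exploit $\nu\in F_i\subseteq F_j$ for every $2\le j\le i$. The first defining property of $F_j$ guarantees that every $k\le n_{j-1}+m_{j-1}$ has some $s\in\orb_\rho(k)\cap\orb_\rho(1)$ with $s\le n_j$, and in particular $\rho_\nu(k)\le n_j$. Taking $j=i$ yields $\rho_\nu(k)\le n_i\le r$ for every $k\le n_{i-1}+m_{i-1}$, so no such $k$ lies in $J_\nu(r)$. Hence $J_\nu(r)\subseteq(n_{i-1}+m_{i-1},\,r]$, a window whose width is bounded by $n_{i+1}-n_{i-1}-m_{i-1}<n_i$.

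Next I would invoke Fine and Wilf: if $k_1<k_2$ both lie in $J_\nu(r)$, then the prefix $\nu_1\dots\nu_r$ has periods $k_1$ and $k_2$, and whenever $k_1+k_2\le r+\gcd(k_1,k_2)$ the theorem forces $\gcd(k_1,k_2)$ to also be a period, hence by the previous step to exceed $n_{i-1}+m_{i-1}$. This produces a sharp dichotomy: either $k_1\mid k_2$ (so $k_2\ge 2k_1$), or $k_1$ must itself be much larger than $n_{i-1}+m_{i-1}$. The second defining condition of the $F_j$'s contributes the complementary constraint that for any $N\le n_i$ the first $N$ entries of $\nu$ contain no contiguous block of $\lfloor N/4\rfloor$ zeros, which rules out period patterns that would otherwise allow too many distinct $k$'s in $J_\nu(r)$. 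An induction on $i$, with the small base cases absorbed into the additive $150$, should then give a bound of the form $\#J_\nu(r)\le\#J_\nu(r')+O(1)$ for a suitably chosen threshold $r'$ one window earlier, yielding the stated $i/2+150$.

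The hardest part will be making the Fine--Wilf step quantitatively sharp. Because $r$ is comparable to $n_i$ while the window $(n_{i-1}+m_{i-1},r]$ has multiplicative ratio less than $2$, two elements $k_1<k_2$ in this window generically satisfy $k_1+k_2>r$, so Fine--Wilf does not apply in its basic form. Handling these ``large'' periods near $r$---by tracking how they propagate across successive scales and combining the no-long-zeros condition with the defining orbit condition of $F_j$ to rule out too many of them---is what should produce the crucial coefficient $i/2$ rather than a worse dependence on $i$.
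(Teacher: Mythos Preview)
Your localization $J_\nu(r)\subseteq(n_{i-1}+m_{i-1},r]$ is correct, but from there the plan does not close, and the paper proceeds quite differently.

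The paper never invokes Fine--Wilf. Its key observation is that $k\in J_\nu(r)$ forces $\nu_{k+1}\dots\nu_r=\nu_1\dots\nu_{r-k}$, so the $F_{j+1}$-bounds on $\rho$ for \emph{small} indices transfer to indices just beyond $k$: if $n_{j+1}<r-k\le n_{j+2}$, then every $k'$ with $k<k'\le k+n_j+m_j$ satisfies $\rho(k')\le k+n_{j+1}<r$, hence $k'\notin J_\nu(r)$. Listing $J_\nu(r)=\{k_1<k_2<\cdots\}$ and writing $j_s$ for the index with $n_{j_s+1}<r-k_s\le n_{j_s+2}$, one obtains $k_{s+1}>k_s+n_{j_s}+m_{j_s}$; a short computation with $n_{j+1}<\tfrac32 n_j$ and $m_j<n_j/4$ then yields $j_{s+1}\le j_s-2$. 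Thus at most $(i-1)/2+1$ elements of $J_\nu(r)$ have $r-k_s>n_2$, and the remaining ones lie in an interval of length $n_2=148$. This is exactly where the coefficient $1/2$ and the additive $150$ come from.

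As you yourself note, Fine--Wilf is obstructed here. Since $J_\nu(r)$ is precisely the set of periods of the word $\nu_1\dots\nu_r$, Fine--Wilf together with your localization would force $k_1=k_2$ whenever $k_1+k_2\le r+\gcd(k_1,k_2)$; so for distinct elements it yields only $k_1+k_2\ge r+2$, which is vacuous once every element exceeds $r/2$ --- and they do, because $n_{i+1}<n_i+n_{i-1}=2(n_{i-1}+m_{i-1})$. The no-long-zeros condition plays no role in the paper's bound on $\#J_\nu(r)$; it is used elsewhere (for the earlier claim and for the barrier construction). What your sketch is missing is precisely the \emph{shifted} application of the $F_j$-conditions after each $k\in J_\nu(r)$, which produces the gap structure and the drop $j_{s+1}\le j_s-2$; without it there is no visible mechanism to reach a bound linear in $i$, let alone with leading coefficient $1/2$.
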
%
Indeed, if $k \in J_\nu(r)$, then $\nu_{k+1} \dots \nu_r = \nu_1
\dots \nu_{r-k}$. Suppose that $r-k> n_2$. Then there is a unique
$j\geq 1$ with $n_{j+2}\geq r-k>n_{j+1}$, and the hypothesis $r<n_{i+1}$ implies
$j<i$. Since all $k'\in\{1,2,\dots, n_j+m_j\}$ have $\rho(k')\leq
n_{j+1} <r-k$ by definition of $F_{j+1}$, it follows that all
$k'\in\{k+1,k+2,\dots,k+n_j+m_j\}$ have $\rho(k')\leq
k+n_{j+1}<r$, hence $k'\notin J_\nu(r)$.

We turn this into an inductive argument:
write $J_\nu(r)=\{k_1,k_2,k_3,\dots,k_S\}$ with $k_s<k_{s+1}$ for
all $s$. Let $j_s$ be so that $n_{j_s+2}\geq r-k_s>n_{j_s+1}$.
Then $k_{s+1}>k_s+n_{j_s}+m_{j_s}$, hence
\begin{eqnarray*}
n_{j_{(s+1)}+1} <
r-k_{s+1}
&<& r-k_s-n_{j_s}-m_{j_s}
\leq
n_{j_s+2}-n_{j_s}-m_{j_s} \\
&=&
n_{j_s}+2m_{j_s}+2m_{j_s+1}-n_{j_s}-m_{j_s}
= m_{j_s}+2m_{j_{s}+1} \\
&<& n_{j_s}/4 + n_{j_s+1}/2 < n_{j_s} \,\,,
\end{eqnarray*}
so $j_{s+1}+1< j_s$ or $j_{s+1}\leq j_s-2$. Since we started with $r-k_1<r<n_{i+1}$, after $s$ turns of this argument we have $r-k_{s+1}< n_{j_{s}}$ with $j_s\le i+1-2s$, and for $s\le (i-1)/2$ we have $j_s\le 2$, so $r-k_{s+1}<n_2$ or $k_{s+1}>r-n_2$, and there are at most $n_2$ such values of $k_s$. ``
It follows that $\# J_\nu(r) \leq
(i-1)/2+1+n_2<i/2+150$ as claimed.

\proofstep{(3) Separate treatment of sub-blocks.}
Now we begin the proof that $\mu(F_{i+1})$ is not too small compared to
$\mu(F_i)$. Suppose $i\geq 40$, which implies
$\lfloor\log_2(i/2+150)\rfloor+1\leq \lfloor i/4\rfloor$.
Let $C$ be any $n_i+m_i$-cylinder in $F_i$ and pick $\nu\in C$.
Divide the integer interval $[n_i+m_i+1, n_{i+1}]$
into $m_i/i$ blocks $B_1,B_2,\dots$ of length $i$.
By Equation~(\ref{EqJ}), $\# J_\nu(n_i+m_i) < i/2+150$, and there
are $2^i$  different possibilities for $B_1$ which extend $C$. We
claim that at least $2^{i/2}$ of them are {\em barriers} in the
sense that for all $k\in J_\nu(n_i+m_i)$, we have
$n_i+m_i+2\lfloor i/4\rfloor\in\orb_\rho(k)$.
This is useful in view of the first condition of the definition
of $F_i$.
To construct these barriers, divide $B_1$ into three
subblocks: the first two of length $\lfloor i/4\rfloor$ each, the
last of length $i-2\lfloor i/4\rfloor\geq i/2$.

In the second subblock, every entry is $\0$. The third subblock
is filled arbitrarily with $\0$ and $\1$, for which there are at
least $2^{i/2}$ possibilities. The first subblock needs more care.

In the first subblock of $B_1$, choose the first entry (at
position $n_i+m_i+1$) so that $n_i+m_i+1\in\orb_\rho(k)$ for at
least half of the elements $k\in J_\nu(n_i+m_i)$; choose the
second entry so that $n_i+m_i+2\in\orb_\rho(k)$ for at least half
of the remaining elements in $J_\nu(n_i+m_i)$, and so on. The
first $\lfloor\log_2(i/2+150)\rfloor+1$ entries in $B_1$ suffice
so that for every $k\in J_\nu(n_i+m_i)$, $\orb_\rho(k)$ contains
at least one of these positions. Since
$\lfloor\log_2(i/2+150)\rfloor+1\leq\lfloor i/4\rfloor$,
we have used only positions within the first subblock of $B_1$.
Fill the remaining entries within the first subblock of $B_1$
arbitrarily.

Since the first $N$ entries of $\nu$ do not contain a contiguous
block of $\lfloor N/4\rfloor$ zeroes, it follows that the orbit
of every $k\in J_\nu(n_i+m_i)$ visits an entry in the second
subblock of $B_1$, and $\orb_\rho(k)$ contains the last position
in the second subblock of $B_1$, which is
$n_i+m_i+2\lfloor i/4\rfloor\in\orb_\rho(k)$.

An analogous construction can be repeated for all the other blocks
$B_2$, $B_3$,\dots.
Suppose that at least a single block $B_j$ has a barrier as above.
Then the construction yields $n_{i+1}$-cylinders which satisfy the
first condition for $F_{i+1}$: for $k\leq n_{i}+m_{i}$, we
either have $\rho(k)\leq n_{i}+m_{i}$ and we consider
$\rho(k)$ instead of $k$, or $k\in J_\nu(n_{i}+m_{i}+(j-1)i)$ and
$\orb_\nu(k)$ visits $n_{i}+m_{i}+2\lfloor i/4\rfloor+(j-1)i$.
Therefore $n_{i}+m_{i}+2\lfloor i/4\rfloor+(j-1)i\in\orb_\nu(k)$
for every $k\leq n_{i}+m_{i}+2\lfloor i/4\rfloor$,
in particular for $k = 1$.

\proofstep{(4) Estimating the relative loss of measure.}
Given the $n_i+m_i$-cylinder $C$, there are $2^i$ continuations
into $n_i+m_i+i$-cylinders in $F_{i+1}$, and at least $2^{i/2}$
of them have a barrier in $B_1$ as constructed above, so at most
a relative proportion of $1-2^{-i/2}$ has no barrier within
$B_1$. The same bound for the relative proportions holds for
$B_2$, $B_3$, \dots. We find that the proportion of
$n_{i+1}$-cylinders in $C$ with no barrier at any block $B_j$ for
$j = 1, \dots, m_i/i$ is $(1-2^{-i/2})^{m_i/i}$.
We have $m_i > (1.45)^i > 2^{i/2}$, so
$m_i2^{-i/2}>\alpha^i$ with $\alpha>1$. Therefore
\[
(1-2^{-i/2})^{m_i/i}
< %&<&
\left(\exp(-2^{-i/2})\right)^{m_i/i}
= \exp\left(-2^{-i/2}\frac{m_i}{i}\right)
< %&<&
\exp(-\alpha^i/i)
< i/\alpha^i,
\]
and the relative proportion of $n_{i+1}$-cylinders in $F_{i+1}$
satisfying the first condition is at least $1-i/\alpha^i$.

We still have to take care of the second condition for $F_{i+1}$.
Among the $2^{2m_i}$ continuations from any $n_i$-cylinder into
an $n_{i+1}$-cylinder, there are $2^{2m_i-\lfloor n_i/8\rfloor}$
which have $\lfloor n_i/8\rfloor$ contiguous entries $\0$
beginning at any given position
$n_i,n_i+1,\dots,n_{i+1}-\lfloor n_i/8\rfloor$, and less than
$2m_i2^{2m_i-\lfloor n_i/8\rfloor}$ which have $\lfloor
n_i/8\rfloor$ contiguous $\0$'s between entries $n_i$ and
$n_{i+1}-1$ (inclusively). The proportion of $n_{i+1}$-cylinders
in $F_i$ which do not contain $\lfloor n_i/8\rfloor$ consecutive
$\0$'s is thus at least $1-2m_i2^{-\lfloor n_i/8\rfloor}>
1-\frac12 n_i2^{-\lfloor n_i/8\rfloor}$.

Therefore
\begin{eqnarray*}
\mu(F_{i+1})
&\geq&
\left[1-\left(1-2^{-i/2}\right)^{{m_i}/i}\right]
\left[1-\frac12 n_i2^{-\lfloor n_i/8\rfloor}\right]\mu(F_i) \\
&>&
\left[1-\frac{i}{\alpha^i}\right]
\left[1-\frac{n_i}{2\cdot2^{\lfloor n_i/8\rfloor}}\right]\mu(F_i)
\end{eqnarray*}
for $i \geq 40$. Since $\sum_i i/\alpha^i<\infty$ and $\sum_i
n_i/2^{\lfloor n_i/8\rfloor}<\infty$, it follows
\[
\mu(E) \geq \mu \left(\bigcap_i F_i\right)
\geq \mu(F_{40}) \prod_{i \geq 40}
\left[1-\frac{i}{\alpha^i}\right]
\left[1-\frac{n_i}{2\cdot2^{\lfloor n_i/8\rfloor}}\right] > 0
\]
because $F_{40}$ contains at least one cylinder set. This proves
the theorem.
\end{proof}

\remark{A similar proof shows that $\mu_p(E) > 0$ for
the $(p,1-p)$-product measure $\mu_p$.}

\begin{table}[htbp]
{\tiny
\[
\begin{array}{r@{\hskip 1cm}r@{\hskip 1cm}r@{\hskip 1cm}r}
     n & \mbox{ non-admis.} &
\mbox{ total} &\mbox{ non-admis./total} \\
\hline
     6 &               1 &            32 &  0.031250 \\
     7 &               2 &            64 &  0.031250 \\
     8 &               7 &           128 &  0.054688 \\
     9 &              17 &           256 &  0.066406 \\
     10&              44 &           512 &  0.085938 \\
     11&              96 &          1\,024 &  0.093750 \\
     12&             221 &          2\,048 &  0.107910 \\
     13&             473 &          4\,096 &  0.115479 \\
     14&            1\,028 &          8\,192 &  0.125488 \\
%\hide{
     15&            2\,160 &         16\,384 &  0.131836 \\
     16&            4\,544 &         32\,768 &  0.138672 \\
     17&            9\,408 &         65\,536 &  0.143555 \\
     18&           19\,488 &        131\,072 &  0.148682 \\
     19&           39\,984 &        262\,144 &  0.152527 \\
     20&           81\,963 &        524\,288 &  0.156332 \\
     21&          167\,138 &       1\,048\,576 &  0.159395 \\
     22&          340\,393 &       2\,097\,152 &  0.162312 \\
     23&          691\,104 &       4\,194\,304 &  0.164772 \\
     24&         1\,401\,610 &       8\,388\,608 &  0.167085 \\
%}
\hide{
\vdots\hskip .1cm& \vdots\quad& \vdots\quad& \vdots\hskip .6cm \\
}
     25&         2\,836\,989 &      16\,777\,216 &  0.169098 \\
     26&         5\,737\,023 &      33\,554\,432 &  0.170977 \\
     27&         11\,586\,451 &      67\,108\,864 &  0.172652 \\
     28&         23\,382\,085 &     134\,217\,728 &  0.174210 \\
     29&         47\,143\,911 &     268\,43\,5456 &  0.175625 \\
     30&         94\,995\,724 &     536\,870\,912 &  0.176943 
%\\
%     31&        191\,292\,387 &    1\,073\,741\,824 &  0.178155 \\
%     32&        385\,016\,428 &    2\,147\,483\,648 &  0.179287
\end{array}
\]
}
\caption{The number of non-admissible cylinders of length $n$,
among all the $2^{n-1}$ cylinders in $\Sym$ of this length, for
$n=6,7,\ldots,30$.
}
\label{TabAdmissible}
\end{table}

\par \medskip \noindent
A computation (shown in Table~\ref{TabAdmissible}) suggests
that about $80$ percent of $\Sym$ is admissible. Unfortunately,
the convergence is too slow to make precise  estimates for $\mu(E)$.
\hide{\comment{The inequalities in the proof should make more precise estimates possible, but probably only for much larger $n$?}
}
Let us discuss some examples from this table: for $n=6$,
the $2^5$ total cylinders
of length $6$ are the binary words of length $6$ starting with
$\1$, and the only non-admissible cylinder is $\1\0\1\1\0\0$ (the
only non-admissible $\*$-periodic sequence is $\ovl{\1\0\1\1\0\*}$,
but the cylinder $\1\0\1\,\,\1\0\1$ is admissible). For $n=7$, the
two non-admissible cylinders are the two continuations of
$\1\0\1\1\0\0$; for $n=8$, we have four continuations, as well as
the new cylinders $\1\0\0\1\,\1\0\0\0$, $\1\1\0\1\,\1\1\0\0$ and
$\1\0\1\1\,\1\1\0\0$.
\hide{In the sense of
Proposition~\protect\ref{PropBranchNonAdmiss}, these sequences
branch off from the admissible subtree at the kneading sequences
$\ovl{\1\0\0\*}$, $\ovl{\1\1\0\*}$ and $\ovl{\1\0\1\1\*}$ of
periods $4$, $4$ and $5$(!).
}


\begin{thebibliography}{MM9}


\bibitem[BS]{KneadingAdmiss} Henk Bruin, Dierk Schleicher,
{\em Admissibility of kneading sequences and structure of Hubbard trees for quadratic polynomials,}
J.\ London.\ Math.\ Soc.\ {\bf 8} (2009),  502--522.

\bibitem[BKS]{BKS} Henk Bruin, Alexandra Kaffl, Dierk Schleicher,
{\em  Existence of quadratic Hubbard trees,}
Fund.\ Math.\ {\bf 202} (2009), 251--279.

\bibitem[DH]{Orsay} Adrien~Douady, John~Hubbard,
{\em \'Etudes dynamique des polyn\^omes complexes
I \& II},
Publ. Math. Orsay. (1984-85) {\em(The Orsay notes)}.

\bibitem[K]{Ke2} Karsten~Keller,
{\em Invariant factors, Julia equivalences and the (abstract)
Mandelbrot set,} Springer Lect. Notes Math. {\bf 1732} (2000).

\bibitem[LS]{IntAdr} Eike~Lau, Dierk~Schleicher,
{\em Internal addresses in the Mandelbrot set and irreducibility
of polynomials,}
Stony Brook Preprint {\bf \#19} (1994).

\bibitem[M]{MiBook} John~Milnor,
{\em Dynamics in one complex variable. Introductory lectures}.
Third edition. Annals of Mathematical Study \textbf{160}. Princeton University Press 2006.

\bibitem[MT]{MiT} John Milnor, William Thurston,
{\em On iterated maps of the interval,}
LNM {\bf 1342} (J.\ C.\ Alexander ed.). 465--563 (1988).


\bibitem[Pe]{Pe} Chris~Penrose,
{\em On quotients of shifts associated with dendrite Julia sets
of quadratic polynomials,}
Ph.D.\ Thesis, University of Coventry, (1994).


\bibitem[Po]{Poirier} Alfredo~Poirier,
\emph{Hubbard trees}. 
Fundamenta Mathematicae \textbf{208} 2 (2010), 193--248. 

\bibitem[S]{IntAddrNew} Dierk Schleicher,
\emph{Internal addresses in the Mandelbrot set and Galois groups of polynomials}; arXiv:math/9411238. 

\bibitem[T]{thurston} William Thurston,
{\em On the geometry and dynamics of iterated rational maps}.
In: Dierk Schlei\-cher (ed.), \emph{Complex dynamics: Families and friends.} A K Peters, Wellesley, 2009, pp. 3--109.


\end{thebibliography}
\end{document}